\DeclarePairedDelimiter{\ceil}{\lceil}{\rceil}
\newtheorem{theorem} {{\textsf{Theorem}}}
\newtheorem{proposition}[theorem]{{\textsf{Proposition}}}
\newtheorem{corollary}[theorem]{{\textsf{Corollary}}}
\newtheorem{ques}[theorem]{{\textsf{Question}}}
\newtheorem{remark}[theorem]{{\textsf{Remark}}}
\newcommand{\Star}{\mbox{\upshape st}\,}
\newcommand{\lk}{\mbox{\upshape lk}\,}
\begin{document}
\title{Average edge order of normal $3$-pseudomanifolds}
\author{Biplab Basak$^1$ and Raju Kumar Gupta}
	
\date{}
	
\maketitle
	
\vspace{-12mm}
	
	\begin{center}
		
		\noindent {\small Department of Mathematics, Indian Institute of Technology Delhi, New Delhi 110016, India$^{2}$.}
		
	\end{center}
	\footnotetext[1]{{\em Corresponding author} }
	\footnotetext[2]{{\em E-mail addresses:} \url{biplab@iitd.ac.in} (B.
		Basak), \url{Raju.Kumar.Gupta@maths.iitd.ac.in} (R. K. Gupta).}

	\medskip
	
	\begin{center}
		\date{May 9, 2025}
	\end{center}
	
	\hrule

\begin{abstract}
In their work \cite{LuoStong}, Feng Luo and Richard Stong introduced the concept of the average edge order, denoted as $\mu_0(K)$. They demonstrated that if $\mu_0(K)\leq \frac{9}{2}$ for a closed $3$-manifold $K$, then $K$ must be a sphere. Building upon this foundation, Makoto Tamura extended similar results to $3$-manifolds with non-empty boundaries in \cite{Tamura1, Tamura2}. In our present study, we extend these findings to normal $3$-pseudomanifolds. Specifically, we establish that for a normal $3$-pseudomanifold $K$ with singularities, $\mu_0(K)\geq\frac{30}{7}$. Moreover, equality holds if and only if $K$ is a one-vertex suspension of a triangulation of  $\mathbb{RP}^2$ with seven vertices. Furthermore, we establish that when $\frac{30}{7}\leq\mu_0(K)\leq\frac{9}{2}$, the $3$-pseudomanifold $K$ can be derived from some boundary complexes of $4$-simplices by a sequence of possible operations, including connected sums, bistellar $1$-moves, edge contractions, edge expansions, vertex folding, and edge folding. 

\end{abstract}
\noindent {\small {\em MSC 2020\,:} Primary 05E45; Secondary 05C07, 05A20, 52B10, 52B70
	
	\noindent {\em Keywords:} Normal pseudomanifolds; Average edge order; Suspension.}

\medskip

\section{Introduction}
Consider a normal $3$-pseudomanifold denoted as $K$. Its {\em average edge order}, denoted by $\mu_0(K)$, is defined as $\mu_0(K)=\frac{3F(K)}{E(K)}$, where $F(K)$ and $E(K)$ represent the number of faces and edges in $K$ respectively. Essentially, $\mu_0(K)$ signifies the average number of faces incident on edges within $K$. In their research \cite{LuoStong}, Feng Luo and Richard Stong demonstrated that a small average edge order in a closed connected $3$-manifold suggests relatively simple topology and imposes constraints on its triangulation. 
The implication of this finding can be summarized as:

\begin{proposition}[\cite{LuoStong}]\label{LuoStong}
Let $K$ be any triangulation of a closed connected $3$-manifold $M$.
Then
\begin{enumerate}[$(i)$]
\item $3 \leq \mu_0(K) < 6$, equality holds if and only if $K$ is the triangulation of the
boundary of a $4$-simplex.
\item For any $e > 0$, there are triangulations $K_1$ and $K_2$ of $M$ such that
$\mu_0(K_1) < 4.5 + e$ and $\mu_0(K_2) > 6 - e$.
\item If $\mu_0(K) < 4.5$, then $K$ is a triangulation of $\mathbb{S}^3$.
\item If $\mu_0(K) = 4.5$, then $K$ is a triangulation of $\mathbb{S}^3$, $\mathbb{S}^2\times \mathbb{S}^1$, or   $\mathbb{S}^2\tilde{\times} \mathbb{S}^1$. Furthermore, in the last two cases the triangulations can be described.
\end{enumerate}

\end{proposition}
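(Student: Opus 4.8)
The plan is to reduce the whole proposition to a single identity and then separate the elementary bounds from the genuinely topological classification. First I would record the standard $f$-vector relations for a closed triangulated $3$-manifold: since every triangle lies in exactly two tetrahedra, $f_2 = 2f_3$, and since the Euler characteristic vanishes, $f_0 - f_1 + f_2 - f_3 = 0$. Eliminating $f_2$ and $f_3$ gives $f_2 = 2(f_1 - f_0)$, whence
\begin{equation*}
\mu_0(K) = \frac{3f_2}{f_1} = 6 - \frac{6 f_0}{f_1}.
\end{equation*}
Thus $\mu_0$ is a strictly decreasing function of the ratio $f_0/f_1 = V/E$ alone, and the entire statement becomes a claim about how small or large this ratio can be. For part $(i)$, $V,E>0$ gives $\mu_0 < 6$ immediately, while the fact that each vertex link is a triangulated $2$-sphere (so has at least $4$ vertices, i.e.\ every vertex has degree $\ge 4$) yields $2f_1 = \sum_v \deg(v) \ge 4f_0$, hence $V/E \le \tfrac12$ and $\mu_0 \ge 3$; equality forces every vertex degree to be exactly $4$, which I would show propagates to make the $1$-skeleton a $K_5$ and pins $K$ down to $\partial\Delta^4$.

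For part $(ii)$ I would argue by local moves, using that any two triangulations of $M$ are connected by bistellar (Pachner) moves, each of which changes $(V,E)$ predictably. A $(1,4)$-move (stellar subdivision of a tetrahedron) sends $(V,E)\mapsto(V+1,E+4)$, so iterating it drives $V/E \to \tfrac14$ and $\mu_0 \to \tfrac92$, producing $K_1$. To produce $K_2$ I would instead use moves that raise $E$ without (or while barely) raising $V$ — e.g.\ $(2,3)$-moves, which fix $V$ and increase $E$ by $1$, or stellar subdivisions of high-order edges — arranged so that $V/E \to 0$ and hence $\mu_0 \to 6$; the only care needed is to exhibit a sequence of such moves that can be continued indefinitely on the given $M$.

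The substance lies in $(iii)$ and $(iv)$, and here I would set up an induction on the number of tetrahedra using homeomorphism-preserving reductions. The dictionary is that an edge of order $3$ (link a $3$-cycle) admits a $3$--$2$ move and a vertex of degree $4$ (link $\partial\Delta^3$) admits a $4$--$1$ move; a short computation with the identity above shows that, when $\mu_0 \le \tfrac92$ (i.e.\ $E \le 4V$), neither move increases $\mu_0$ — a $3$--$2$ move strictly lowers it, and a $4$--$1$ move lowers it unless $E = 4V$ exactly. A counting argument on edge orders, namely $\sum_k (k - \tfrac92)\,n_k \le 0$ where $n_k$ is the number of order-$k$ edges, forces the presence of order-$3$ or order-$4$ edges when $\mu_0 < \tfrac92$, and the degree bound forces a vertex of degree at most $7$; these are the candidate reduction sites. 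For $(iii)$ the goal is then: if $\mu_0 < \tfrac92$ and $K \ne \partial\Delta^4$, some reduction is actually performable, so by induction (each reduction stays below $\tfrac92$ and preserves $M$) we descend to $\partial\Delta^4$ and conclude $M \cong \mathbb{S}^3$. For $(iv)$, any performable $3$--$2$ move drops $\mu_0$ strictly below $\tfrac92$ and hence gives $\mathbb{S}^3$ by $(iii)$; otherwise no order-$3$ edge reduces, and reducing by degree-$4$ vertices (which preserves $\mu_0 = \tfrac92$) brings $K$ to an irreducible core that I would classify directly, the non-spherical cores being the advertised triangulations of $\mathbb{S}^2 \times \mathbb{S}^1$ and $\mathbb{S}^2 \tilde{\times} \mathbb{S}^1$.

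The main obstacle is precisely the \emph{performability} clause, since abundance of low-order edges does not by itself yield a legal move: a $3$--$2$ move on an order-$3$ edge $uv$ is blocked when the triangle $\lk(uv)$ already appears elsewhere in $K$, and a $4$--$1$ move is blocked when the four neighbours of the degree-$4$ vertex already span a tetrahedron (with analogous obstructions for order-$4$ edges and for edge contractions). The heart of the argument — and where I expect almost all the effort to go — is a careful local analysis showing that these blocking faces cannot overlap at every candidate site simultaneously unless $K$ is one of finitely many explicitly describable triangulations, together with the direct classification of the irreducible threshold triangulations required for the equality case $(iv)$.
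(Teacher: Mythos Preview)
There is nothing to compare against: in the paper this proposition is not proved at all. It is quoted verbatim from Luo--Stong \cite{LuoStong} as background (note the \texttt{[\textbackslash cite\{LuoStong\}]} tag on the proposition environment), and the present paper only \emph{uses} part $(iii)$ once, in the corollary at the end of Section~3. So your write-up is not a reconstruction of the paper's argument but an independent sketch of the original Luo--Stong theorem.

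As a sketch of Luo--Stong itself, your outline is broadly on the right track---the identity $\mu_0 = 6 - 6V/E$, the degree-$4$ lower bound for $(i)$, bistellar moves for $(ii)$, and reduction by low-order edges/vertices for $(iii)$--$(iv)$ are indeed the ingredients. You have also correctly identified where the real work lies: the ``performability'' analysis showing that blocking configurations cannot persist everywhere. Be aware, though, that in the actual Luo--Stong paper this local analysis is lengthy and does not reduce to a single counting trick; your parenthetical ``I expect almost all the effort to go'' is accurate, and what you have written for $(iii)$--$(iv)$ is a plan rather than a proof. One small correction for $(ii)$: repeated $(2,3)$-moves alone will not suffice to push $\mu_0 \to 6$ on an arbitrary $M$, since a $(2,3)$-move requires a pair of tetrahedra sharing a face whose opposite vertices are not already joined by an edge, and such pairs can be exhausted; the standard constructions instead build explicit families (e.g.\ iterated barycentric-type refinements or cyclic triangulations) rather than relying on indefinite chains of $(2,3)$-moves.
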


Building upon the previous discussion, Makoto Tamura contributed further insights in \cite{Tamura1} by extending similar results to $3$-manifolds with non-empty boundaries. In \cite{Tamura2}, Tamura revised the definition of the average edge order as follows: $\mu_0(K)=3\frac{F_0(K)}{E_0(K)}$, where  $E_0(K)= E_i (K) + \frac{E_{\partial}(K)}{2}$ and $F_0(K)= F_i (K) + \frac{F_{\partial}(K)}{2}$. Here, $E_i(K)$ (respectively, $F_i(K)$) represents the number of edges (respectively, faces) within int $K=K\setminus\partial(K)$, while $E_{\partial}(K)$ (respectively, $F_{\partial}(K)$) denotes the number of edges (respectively, faces) within $\partial(K)$. With this refined definition of the average edge order, Tamura established the following theorem:

\begin{proposition}[\cite{Tamura2}, Theorem 1.2]	
Let $K$ be any triangulation of a compact connected $3$-manifold $M$ with non-empty boundary. Then
\begin{enumerate}[$(i)$]
\item $2 \leq\mu_0(K)< 6$, and equality holds if and only if $K$ is the triangulation of
one $3$-simplex.
\item For any rational number $r$ with $4 < r < 6$, there is a triangulation $K'$ of $M$
such that $\mu_0(K') = r$.
\item If $\mu_0(K) < 4$, then $K$ is a triangulation of $\mathbb{B}^3$. There are an infinite number of distinct such triangulations, but for any constant $c < 4$ there are only finitely many triangulations $K$ with $\mu_0(K) \leq c$.
\item If $\mu_0(K) = 4$, then $K$ is a triangulation of $\mathbb{B}^3, \mathbb{D}^2\times \mathbb{S}^1$ or $\mathbb{D}^2\tilde{\times} \mathbb{S}^1$. Furthermore, in the last two cases the triangulations can be described.
\end{enumerate}

\end{proposition}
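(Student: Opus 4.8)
\section*{Proof proposal}

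The plan is to reduce all four parts to a single master identity for $\mu_0(K)$ and then read each part off from it. First I would record the two counting relations valid for any triangulation $K$ of a compact $3$-manifold $M$ with boundary. Double counting face--tetrahedron incidences gives $4T = 2F_i(K) + F_\partial(K)$, since interior faces lie in two tetrahedra and boundary faces in one; hence $F_0(K) = \tfrac12\bigl(2F_i(K) + F_\partial(K)\bigr) = 2T$, where $T$ is the number of tetrahedra. Also, since $\partial M$ is a closed surface, $2E_\partial(K) = 3F_\partial(K)$. Feeding these together with $\chi(\partial M) = 2\chi(M)$ (which follows from $\chi$ of the double of $M$ being $0$), written out in terms of the full $f$-vector, I would obtain $2T = 2E_0(K) - \bigl(2V_i(K) + V_\partial(K)\bigr)$ after routine substitution, and hence the master formula
\[
\mu_0(K) = \frac{3F_0(K)}{E_0(K)} = \frac{6T}{E_0(K)} = 6 - \frac{3\bigl(2V_i(K) + V_\partial(K)\bigr)}{E_0(K)}.
\]
Since $2V_i(K) + V_\partial(K) > 0$, this already yields $\mu_0(K) < 6$.

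For the lower bound and the equality case in $(i)$, I would bound $E_0$ from below using vertex links. The link of an interior vertex is a triangulated $2$-sphere, so it has at least $4$ vertices, i.e.\ at least $4$ incident edges, all interior; the link of a boundary vertex $v$ is a triangulated disk, and writing $b(v)$, $c(v)$ for the numbers of boundary and interior vertices of that disk, $v$ is incident to $b(v) \ge 3$ boundary edges and $c(v) \ge 0$ interior edges. Summing incidences gives $2E_i = \sum_{\text{int }v}\deg v + \sum_{\text{bdy }v} c(v)$ and $2E_\partial = \sum_{\text{bdy }v} b(v)$, whence
\[
E_0 = E_i + \tfrac12 E_\partial \ge 2V_i + \tfrac34 V_\partial \ge \tfrac34\bigl(2V_i + V_\partial\bigr).
\]
Plugging this into the master formula gives $\mu_0 \ge 2$. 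Equality forces $V_i = 0$, every $c(v) = 0$, and every $b(v) = 3$, i.e.\ every vertex link is a single triangle; by connectedness $K$ is then a single $3$-simplex, which indeed realizes $\mu_0 = 2$.

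For $(ii)$ I would exhibit explicit families. The master formula shows that inflating $E_0$ while keeping $2V_i + V_\partial$ controlled drives $\mu_0$ toward $6$, while the thin triangulations near $(iv)$ sit just above $4$, so the task is to build, for a prescribed rational target $r \in (4,6)$, a gadget with tunable parameters (for instance repeated stellar subdivisions or stacked tetrahedra inserted into a fixed triangulation of $M$) for which $\mu_0$ depends linearly on the number of inserted pieces; I would then solve that linear relation to hit $r$ exactly.

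The heart of the argument, and the step I expect to be hardest, is the rigidity in $(iii)$ and $(iv)$. Here $\mu_0 < 4$ translates, via the master formula, into $E_0 < \tfrac32\bigl(2V_i + V_\partial\bigr)$, i.e.\ only a small excess over the bound used in $(i)$; this forces almost all vertex links to be minimal and almost all edges to have low degree. I would exploit this to produce a low-degree edge, or a vertex whose star is a single simplex, collapse or contract it to pass to a strictly smaller triangulation whose $\mu_0$ stays below the threshold, and induct down to the single simplex, identifying $M$ as $\mathbb{B}^3$; the same degree bounds cap $T$ for fixed $c < 4$, giving finiteness, while infinitely many ball triangulations with $\mu_0 \to 4$ come from the subdivision families of $(ii)$. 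At the borderline $\mu_0 = 4$ the excess is just large enough to also permit triangulations whose boundary carries nontrivial $\pi_1$, and a careful enumeration of the permitted local configurations should pin these down to $\mathbb{B}^3$, $\mathbb{D}^2\times \mathbb{S}^1$, and $\mathbb{D}^2\tilde{\times} \mathbb{S}^1$ together with explicit triangulations. The main obstacle throughout $(iii)$--$(iv)$ is the topological recognition: converting combinatorial degree constraints into an actual homeomorphism type requires controlling the reduction moves so that they neither alter the topology nor raise $\mu_0$ past the threshold, and then treating the finitely many irreducible configurations by hand.
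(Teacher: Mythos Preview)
The paper does not prove this proposition at all: it is quoted verbatim as Theorem~1.2 of Tamura \cite{Tamura2} and serves purely as background motivation for the new results on normal $3$-pseudomanifolds (Theorem~\ref{theorem:maintheorem}). There is therefore no ``paper's own proof'' to compare your proposal against; any comparison would have to be made with Tamura's original article, not the present one.

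That said, a brief comment on your sketch. Your master identity $\mu_0(K)=6-\dfrac{3(2V_i+V_\partial)}{E_0}$ is correct and is indeed the natural starting point; it immediately gives $\mu_0<6$, and your vertex-link count cleanly yields $\mu_0\ge 2$ with the right equality case. However, your outline for $(iii)$ and $(iv)$ is not yet a proof: the assertion that $\mu_0<4$ forces a contractible edge or a free-face vertex whose removal keeps $\mu_0$ below the threshold is exactly the delicate combinatorial step, and nothing in your sketch explains why such a simplification exists or why it preserves the inequality. Likewise, for $(ii)$ you assert one can hit every rational $r\in(4,6)$ by a linear family, but producing such a family inside an \emph{arbitrary} prescribed $M$ (not just in $\mathbb{B}^3$) needs an explicit construction, not just a parameter count. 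So parts $(i)$ are essentially complete in your proposal, while $(ii)$--$(iv)$ remain programmatic.
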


\medskip

\noindent  In \cite{Casali1, Casali2}, similar work is done for colored triangulations of manifolds. Some classification results for closed connected smooth low–dimensional manifolds according to average edge order is given in \cite{CavicchioliSpaggiari}. In this article, we establish comparable conclusions for normal $3$- pseudomanifolds without boundaries. In fact, we prove the following result:

\begin{theorem}\label{theorem:maintheorem}
Let $K$ be a normal $3$-pseudomanifold with singularities, and let $\mu_0(K)$ represent the average edge order of $K$. Then

\begin{enumerate}[$(i)$]
\item $\mu_0(K)\geq \frac{30}{7}$, and equality holds if and only if $K$ is a one-vertex suspension of $\mathbb{RP}^2$ with seven vertices.

\item If $\mu_0(K)\leq\frac{9}{2}$, then $K$ is obtained from some boundary complexes of $4$-simplices by a sequence of possible operations, including connected sums, bistellar $1$-moves, edge contractions, edge expansions, vertex folding, and edge folding. In particular, either $|K|$ is a handlebody with its boundary coned off or a suspension of $\mathbb{RP}^2$.

\item If $K$ contains exactly $n$ singular vertices, then $\mu_0(K)<6+n$. Additionally, a sequence of normal $3$-pseudomanifolds, denoted as $\{K_m\}_{m\geq 1}$, with two singularities exists, such that $\mu_0(K_m)\rightarrow 8$ as $m\rightarrow\infty$.
\end{enumerate}
\end{theorem}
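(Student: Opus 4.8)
The backbone of the argument is an Euler--Dehn--Sommerville identity for the vertex links. Writing $f_i$ for the number of $i$-faces, summing $\chi(\lk v)=f_0(\lk v)-f_1(\lk v)+f_2(\lk v)$ over all vertices $v$ and using that every triangle lies in exactly two tetrahedra (so $f_2=2f_3$) gives $\sum_v\chi(\lk v)=2f_1-f_2$, hence the master identity
\begin{equation}
\mu_0(K)=\frac{3f_2}{f_1}=6-\frac{3\sum_{v}\chi(\lk v)}{f_1}.\tag{$\dagger$}
\end{equation}
Every non-singular link is a $2$-sphere and contributes $\chi=2$, while every singular link is a closed surface with $\chi\le 1$. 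I will write $n$ for the number of singular vertices and $S=\sum_{v\text{ sing}}\chi(\lk v)$; since $2f_1-f_2$ is even, $S$ is forced to be even, a parity constraint that turns out to be decisive.

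For part (iii) the identity $(\dagger)$ does almost all the work. For each singular vertex $v$ the edges of $\lk v$ are distinct edges of $K$ missing $v$, so $f_1(\lk v)\le f_1-f_0(\lk v)$; combined with $\chi(\lk v)=f_0(\lk v)-\tfrac13 f_1(\lk v)$ this yields $-\chi(\lk v)<f_1/3$. Summing over the $n$ singular vertices and discarding the non-negative non-singular contributions in $(\dagger)$ gives $\mu_0(K)-6\le 3\sum_{v\text{ sing}}(-\chi(\lk v))/f_1<n$, which is the bound claimed. For the family $\{K_m\}$ I would take ordinary suspensions of minimal triangulations of surfaces of genus $m$: a direct count gives $\mu_0=24(p-\chi)/(5p-3\chi)$ for a surface on $p$ vertices with Euler characteristic $\chi$, and letting the genus grow (so $-\chi\sim p^2/6$) drives this toward $24/3=8$ while keeping exactly two singular vertices (the apexes).

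For part (i), rewriting $(\dagger)$ shows that $\mu_0\ge\tfrac{30}{7}$ is equivalent to $f_1\ge\tfrac74\sum_v\chi(\lk v)$. The plan is to split on the number of vertices. When $f_0$ is large the Lower Bound Theorem $g_2:=f_1-4f_0+10\ge 0$ already suffices: together with $\sum_v\chi(\lk v)\le 2f_0-n$ it forces $\mu_0\ge\tfrac{30}{7}$ whenever $f_0\ge 20-\tfrac72 n$, leaving only finitely many small configurations. The real content, and the main obstacle, is a sharpened lower bound measuring the defect of the singular links, of the form $g_2(K)\ge 3n-\tfrac32 S$ (equivalently $g_2(K)\ge\tfrac12\sum_{v\text{ sing}}g_2(\lk v)$); this refines the generalized Lower Bound Theorem and is exactly where rigidity-theoretic input for normal pseudomanifolds is needed. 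Feeding it in reduces the claim to $2f_0+13(2n-S)\ge 40$. Since $2n-S$ is a positive even integer it is at least $2$, and since any singular vertex has at least six neighbours we have $f_0\ge 7$; this gives the inequality, with equality precisely when $f_0=7$ and $2n-S=2$.

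The equality analysis then closes part (i): the parity of $S$ rules out a single $\mathbb{RP}^2$, and the minimal vertex counts of the torus and Klein bottle rule out the one-singularity case at $f_0=7$, so equality forces $n=2$ with both links equal to the six-vertex $\mathbb{RP}^2$, each singular vertex adjacent to all six others. A short direct argument then shows these two copies of $\mathbb{RP}^2$ can be glued in only one way, namely the one-vertex suspension $\simtimes\mathbb{RP}^2$ on seven vertices. Finally, for part (ii) the hypothesis $\mu_0\le\tfrac92$, combined with $(\dagger)$ and the two $g_2$ bounds, pins $K$ into the bounded window $2n-S=2$ and $3\le g_2\le 6$; I would then apply (or develop) a structure theorem classifying normal $3$-pseudomanifolds in this range, showing they are generated from boundary complexes of $4$-simplices by the listed operations, with the vertex- and edge-folding moves producing the singularities, and with the two surviving topological types being a handlebody with its boundary coned off ($n=1$) and a suspension of $\mathbb{RP}^2$ ($n=2$). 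I expect this structure theorem, together with the refined $g_2$ estimate, to be the heaviest part of the work.
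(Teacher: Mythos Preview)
Your master identity $(\dagger)$ and the overall architecture match the paper's, but the rigidity input you single out as ``the main obstacle'' is both stronger than what the paper uses and stronger than necessary. The paper never invokes a half-sum bound $g_2(K)\ge\tfrac12\sum_{v\text{ sing}}g_2(\lk v)$; it relies only on the vertex-wise inequality $g_2(K)\ge g_2(\lk v)$ for each vertex $v$ (Proposition~\ref{proposition:lower bound}, quoted from \cite{BSR1}), and this already closes everything. Concretely, $\mu_0\le\tfrac92$ rearranges to $g_2\le 10-2(2n-S)$; combined with $g_2\ge 6-3\chi_{\min}$ and $\chi_{\min}\le 1$ this forces $2n-S\le 3$, hence $2n-S=2$ by parity, leaving either $n=1$ with a torus or Klein-bottle link, or $n=2$ with two $\mathbb{RP}^2$ links. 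The paper proves (ii) first via this case split and then deduces (i): in the one-singularity cases $g_2=6$ gives $\mu_0=\tfrac92$ exactly, while in the two-$\mathbb{RP}^2$ case $\mu_0\le\tfrac{30}{7}$ rearranges to $V\le 13-2g_2$, forcing $V=7$ and $g_2=3$. Your direct route to (i) works just as well with only the vertex-wise bound: one obtains $2f_0+7(2n-S)-12\chi_{\min}\ge 16$, tight precisely at $f_0=7$, $2n-S=2$, $\chi_{\min}=1$. So the half-sum estimate---which you do not prove and which is not, as far as I can see, in the literature---is a detour you should simply drop.

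Two smaller comparisons. Your argument for the upper bound in (iii) via $f_1(\lk v)\le f_1-f_0(\lk v)$ is correct and more self-contained than the paper's, which again routes through Proposition~\ref{proposition:lower bound} to get $E>6h$ and $E>3m$ before bounding each summand; the suspension family $\{K_m\}$ is the same in both. For (ii), once the singular configurations are pinned down, both you and the paper appeal to the structure theorems of \cite{BSR1} (Propositions~\ref{proposition:1-singularity} and~\ref{proposition:2-singularity}); these are existing results, not something you need to develop anew.
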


\section{Preliminaries}
A {\em $d$-simplex} is the convex hull of $d+1$ affinely independent points. A $0$-simplex is a point, a $1$-simplex is an edge, a $2$-simplex is a triangle, and so on. Let $\sigma$ be a simplex. Then, {\em faces} of $\sigma$ are defined as the convex hulls of non-empty subsets of the vertex set of $\sigma$. We denote a face $\tau$ of $\sigma$ as $\tau\leq\sigma$. A {\em simplicial complex} $K$ is a finite collection of simplices in $\mathbb{R}^m$ for some $m\in\mathbb{N}$ that satisfies the following conditions: if $\sigma\in K$, then $\tau\leq\sigma$ implies $\tau\in K$; for any two faces $\sigma$ and $\tau$ in $K$, either $\sigma\cap\tau=\emptyset$ or $\sigma\cap\tau\leq\sigma$ and $\sigma\cap\tau\leq\tau$.
We assume that the {\em empty simplex} $\emptyset$ (considered as a simplex of dimension $-1$) is a member of every simplicial complex. If $K$ is a simplicial complex, then its {\em geometric carrier} $|K|=\cup_{\sigma \in K} \sigma$ is the union of all the simplices in $K$ together with the subspace topology induced from $\mathbb{R}^n$ for some $n\in \mathbb{N}$. The {\em dimension} of $K$ is defined as the maximum of the dimension of simplices in $K$. If $\sigma$ and $\tau$ are two skew simplices in $\mathbb{R}^n$ for some $n\in\mathbb{N}$, then their {\em join} $\sigma\ast \tau$ is defined as $\{\lambda p + \mu q \, | \, p\in\sigma, q\in\tau; \, \lambda, \mu \in [0, 1]$ and $ \lambda + \mu = 1\}$. Similarly, we define the {\em join} of two skew simplicial complexes $K_1$ and $K_2$ as $\{\sigma \ast \tau | \sigma\in K_1, \tau\in K_2\}$. The {\em link} of a face $\sigma$ in a simplicial complex $K$ is defined as $\{\tau\in K \,|\, \sigma\cap\tau=\emptyset, \tau\ast\sigma\in K\}$. The link of $\sigma$ is denoted by $\lk(\sigma,K)$. The {\em star} of a face $\sigma$ in a simplicial complex $K$ is defined as $\{\beta | \beta\leq \sigma\ast\alpha$ and $\alpha\in \lk (\sigma, K)\}$, and is denoted by $\Star (\sigma, K)$.

A simplicial complex $K$ is called {\em pure} if all its facets possess the same dimension. A simplicial complex $K$ is called a {\em normal $d$-pseudomanifold without a boundary} if it is pure, every face of dimension $d-1$ is contained in precisely two facets, and the link of every face of dimension $\leq d-2$ is connected. If some faces of dimension $d-1$ are contained in only one facet, then $K$ is called a {\em normal $d$-pseudomanifold with boundary}.
For a normal $3$-pseudomanifold $K$, the link of a vertex $v$ is a triangulated closed connected surface. If $|\lk(v)|\cong \mathbb{S}^2$, then we call $v$ a non-singular vertex; otherwise, $v$ is called a singular vertex. Let $K$ be a simplicial complex. Let $V(K)$, $E(K)$, $F(K)$, and $T(K)$ represent the counts of vertices, edges, faces, and tetrahedra in $K$, respectively. In this article, we will use the abbreviations $V$, $E$, $F$, and $T$ to refer to $V(K)$, $E(K)$, $F(K)$, and $T(K)$ when the context is clear that we are discussing simplicial complex $K$. The numbers $g_2$ and $g_3$ for a $3$-dimensional simplicial complex $K$ are defined as $g_2(K)=E-4V+10,$ and $g_3(K)= F-3E+6V-10.$
 \begin{remark}\label{7vertices}
 	{\rm Let $\{[1,2,4]$, $ [1,3,4]$, $ [1,2,5]$, $ [1,3,6]$, $ [1,5,6]$, $ [2,4,6]$, $ [2,3,5]$, $ [2,3,6]$, $ [3,4,5],$ $[4,5,6]\}$ denote a set of facets comprising the triangulation $L$ of $\mathbb{RP}^2$ with six vertices.
Consider $K=(uv\ast\lk(6))\cup(u\ast L\setminus \{6\})\cup(v\ast L\setminus \{6\})$. We define $K$ as the one-vertex suspension of the complex $L$. Topologically, $|K|$ represents the suspension of $|L|$. Consequently, $\{[1,2,4,u]$, $[1,3,4,u]$, $[1,2,5,u]$, $[1,2,4,v]$, $[1,3,4,v]$, $[1,2,5,v]$, $[1,3,u,v]$, $[1,5,u,v]$, $[2,4,u,v]$, $[2,3,5,u]$, $[2,3,5,v]$, $[2,3,u,v]$, $[3,4,5,u]$, $[3,4,5,v]$, $[4,5,u,v]\}$ forms the set of facets of $K$.}
\end{remark}
  
\begin{proposition}[Lemma $2.6$, \cite{BSR1}]\label{proposition:lower bound}
	Let $K$ be a normal $3$-pseudomanifold, and $v\in K$ be a vertex in $K$. Then $g_2(K)\geq g_2(\Star (v,K)) = g_2(\lk (v,K))$.
\end{proposition}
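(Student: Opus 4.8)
The plan is to prove the equality and the inequality by different means. For the equality $g_2(\Star(v,K)) = g_2(\lk(v,K))$, I would argue by a direct face count, using that the closed star is the cone $\Star(v,K) = v \ast \lk(v,K)$ over the surface $L := \lk(v,K)$. If $L$ has $V'$ vertices and $E'$ edges, then coning adds exactly one vertex ($v$), one edge $vu$ for each vertex $u$ of $L$, and one triangle $v \ast e$ for each edge $e$ of $L$; hence $V(\Star(v,K)) = V' + 1$ and $E(\Star(v,K)) = E' + V'$. Substituting into $g_2 = E - 4V + 10$ gives $g_2(\Star(v,K)) = (E' + V') - 4(V' + 1) + 10 = E' - 3V' + 6$, which is precisely the value of $g_2$ for the $2$-dimensional complex $L$ read off from the general formula $f_1 - d f_0 + \binom{d+1}{2}$ with $d = 3$. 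This is routine and takes only a few lines.

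The inequality $g_2(K) \ge g_2(\Star(v,K))$ is the heart of the matter. A useful first step is to rewrite the difference in terms of data outside the closed star: letting $R$ be the number of edges of $K$ that do not lie in $\Star(v,K)$, and $W$ the set of vertices lying outside the closed star (so $|W| = V - \deg(v) - 1$), a short manipulation of the definitions gives $g_2(K) - g_2(\Star(v,K)) = R - 4|W|$. Everything therefore reduces to the inequality $R \ge 4|W|$, and this is where I expect the main difficulty. A purely combinatorial count is not enough: every $w \in W$ has a closed surface as its link and hence degree at least $4$, but summing these degrees bounds $R$ only from below by $2|W|$, because edges internal to $W$ are counted twice. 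The naive argument is thus short by a factor of two, and some genuinely stronger structural input is needed to produce the factor $4$.

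To supply that input I would turn to generic rigidity. The graph of a normal $3$-pseudomanifold is generically $4$-rigid (the normal pseudomanifold form of Fogelsanger's theorem), so in a generic embedding into $\mathbb{R}^4$ the dimension of the space of self-stresses of $G(K)$ equals $E - (4V - 10) = g_2(K)$. Similarly $L = \lk(v,K)$ is a triangulated closed surface, whose graph is generically $3$-rigid, and since coning preserves rigidity while raising the rigidity dimension by one, $G(\Star(v,K)) = v \ast G(L)$ is generically $4$-rigid with stress space of dimension $g_2(\Star(v,K))$. The decisive observation is then a monotonicity of stresses: fix one generic configuration of $V(K)$ in $\mathbb{R}^4$ and restrict it to the vertices of the closed star; any self-stress of $G(\Star(v,K))$ extends, by assigning $0$ to all edges outside the star, to a self-stress of $G(K)$, since the equilibrium condition is automatically satisfied at each $w \in W$ (no edge incident to $w$ lies in the star). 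This yields a linear injection of stress spaces, and hence $g_2(\Star(v,K)) = \dim \mathrm{stress}(G(\Star(v,K))) \le \dim \mathrm{stress}(G(K)) = g_2(K)$, which is the desired inequality. The one point demanding care is justifying the factor $4$, i.e. the generic $4$-rigidity of $G(K)$ in the singular setting, which is exactly what the rigidity machinery buys us over the failed degree count.
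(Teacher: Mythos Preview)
The paper does not prove this proposition at all: it is quoted verbatim as Lemma~2.6 of \cite{BSR1} and then used as a black box. So there is no ``paper's own proof'' to compare against here.

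Your argument is correct and is in fact the standard route in the literature (Kalai \cite{Kalai}, and for normal pseudomanifolds Bagchi--Datta \cite{BagchiDatta}). The equality $g_2(\Star(v,K))=g_2(\lk(v,K))$ is the routine cone calculation you give. For the inequality, your rigidity argument is sound: generic $4$-rigidity of $G(K)$ for a normal $3$-pseudomanifold follows from Fogelsanger's minimal-cycle theorem (every normal $3$-pseudomanifold is a minimal $3$-cycle); the link is a closed surface, hence its graph is generically $3$-rigid; Whiteley's cone lemma then gives generic $4$-rigidity of $G(\Star(v,K))$; and the zero-extension of self-stresses yields the injection of stress spaces, so $g_2(\Star(v,K))=\dim\mathrm{Stress}(G(\Star(v,K)))\le\dim\mathrm{Stress}(G(K))=g_2(K)$. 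You are also right that both rigidity facts are needed to identify the stress dimensions with the respective $g_2$'s, so neither can be dropped. The intermediate paragraph rewriting the difference as $R-4|W|$ and observing that the naive degree count fails is correct but ultimately unused once you switch to rigidity; it can be shortened or omitted without loss.
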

For definitions of operations like connected sum, bistellar $1$-move, edge contraction, edge expansion, vertex folding, and edge folding, please see \cite{BSR1}.
The subsequent results can be obtained by substituting $n=1$ into \cite[Theorem 4.4]{BSR1} and $m=1$ into \cite[Theorem 5.5]{BSR1}. 
\begin{proposition}\label{proposition:1-singularity}
	Let $K$ be a normal $3$-pseudomanifold with exactly one singularity at $t$, such that $|\lk (t,K)|$ is a torus or Klein bottle. Then, $g_2(K) \leq 15$ implies that $K$ is obtained from some boundary complexes of $4$-simplices by a sequence of operations of types connected sums, bistellar $1$-moves, edge contractions, edge expansions, and a vertex folding. Furthermore, $|K|$ is a handlebody with its boundary coned off.
\end{proposition}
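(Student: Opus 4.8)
The plan is to deduce the statement directly from the two general classification theorems of \cite{BSR1} by specializing their parameters to a single singular vertex, handling the torus and Klein bottle cases separately. First I would fix the two-case structure dictated by the hypothesis: since $K$ has exactly one singularity $t$ and $|\lk(t,K)|$ is either a torus or a Klein bottle, the controlling invariant is $g_2(\lk(t,K))$. By Proposition~\ref{proposition:lower bound} we have $g_2(\Star(t,K)) = g_2(\lk(t,K))$, and a direct Euler-characteristic count on the cone $\Star(t,K) = t\ast\lk(t,K)$ gives $g_2(\Star(t,K)) = 6 - 3\chi(\lk(t,K)) = 6$, because a torus and a Klein bottle both have $\chi = 0$. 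Hence Proposition~\ref{proposition:lower bound} already forces $g_2(K) \ge 6$, and together with the hypothesis this confines $g_2(K)$ to the window $6 \le g_2(K) \le 15$ in which the structure theorems are designed to apply.

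In the torus case I would invoke \cite[Theorem 4.4]{BSR1}, which classifies normal $3$-pseudomanifolds carrying $n$ singular vertices with torus links under a $g_2$-threshold depending on $n$. Substituting $n=1$ selects exactly the hypothesis of a single torus singularity, and the corresponding threshold in that theorem reduces to $g_2(K)\le 15$. Its conclusion at $n=1$ then asserts that $K$ is obtained from boundary complexes of $4$-simplices by connected sums, bistellar $1$-moves, edge contractions, edge expansions, and a single vertex folding, and that $|K|$ is a handlebody with its boundary coned off. The Klein bottle case proceeds identically through \cite[Theorem 5.5]{BSR1} with $m=1$: the parameter $m$ counts the Klein bottle singularities, so $m=1$ isolates the single Klein bottle link, the threshold there likewise collapses to $g_2(K)\le 15$, and the conclusion is the same list of operations together with the same coned-off handlebody description of $|K|$. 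Combining the two cases yields the full statement.

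The only genuine verification — and hence the main obstacle — is the bookkeeping of the specialization: one must check that the $n$- and $m$-dependent thresholds in \cite[Theorems 4.4 and 5.5]{BSR1} both evaluate to $15$ at the value $1$, that \cite[Theorem 4.4]{BSR1} indeed governs the torus link and \cite[Theorem 5.5]{BSR1} the Klein bottle link, and that the permitted operation lists in those theorems coincide with the five operations named here (in particular that only one vertex folding is needed and no edge folding arises when a single singularity is present). Once these points are confirmed, no further argument is required, since the substantive combinatorial work — the reduction of the star of the singular vertex and the inductive dismantling of $K$ into boundary complexes of $4$-simplices — is already carried out in the proofs of the cited theorems.
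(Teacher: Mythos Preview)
Your strategy is exactly the paper's: the proposition is not proved from scratch but quoted as the $n=1$ specialization of the structure theorems in \cite{BSR1}. One small correction to your bookkeeping: the paper's sentence preceding the proposition indicates that \cite[Theorem~4.4]{BSR1} with $n=1$ already yields the \emph{entire} statement (torus \emph{and} Klein bottle link), while \cite[Theorem~5.5]{BSR1} with $m=1$ is reserved for the companion Proposition~\ref{proposition:2-singularity} on two $\mathbb{RP}^2$ singularities --- so your tentative split ``4.4 for torus, 5.5 for Klein bottle'' is off, but you rightly flagged this assignment as the point to verify, and once corrected nothing else changes.
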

\begin{proposition}\label{proposition:2-singularity}
	Let $K$ be a normal $3$-pseudomanifold with exactly two $\mathbb{RP}^2$ singularities. Then, $g_2{(K)} \leq 12$ implies $K$ is obtained from some boundary complexes of $4$-simplices by a sequence of operations of types connected sums, bistellar $1$-moves, edge contractions, edge expansions, and an edge folding. 
\end{proposition}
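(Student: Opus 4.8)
The statement is the $m=1$ instance of \cite[Theorem 5.5]{BSR1}, so the most direct route is simply to invoke that theorem; I will nonetheless describe how I would establish it from scratch, since the specialization makes the combinatorics transparent. The guiding principle is that an edge folding is precisely the operation that converts a single edge into a pair of $\mathbb{RP}^2$ singularities: folding along an edge $\{a,b\}$ turns each of $a$ and $b$ into a vertex whose link is a triangulated $\mathbb{RP}^2$. Consequently, a normal $3$-pseudomanifold with exactly two $\mathbb{RP}^2$ singularities should be regarded as the image of a simpler pseudomanifold (ideally a closed manifold) under one edge folding, and the whole problem reduces to recognizing that folded structure from the $g_2$ bound and then running the manifold-level classification on the unfolded object.

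First I would record the local $g_2$-contribution of each singular vertex. If $v$ is a vertex whose link is a triangulation of $\mathbb{RP}^2$, then a short computation using $3f_2 = 2f_1$ together with Euler's relation for the surface gives $g_2(\Star(v,K)) = 3\bigl(2-\chi(\mathbb{RP}^2)\bigr) = 3$; combined with Proposition \ref{proposition:lower bound} this yields $g_2(K)\ge 3$ and, more importantly, quantifies the "budget" each $\mathbb{RP}^2$ vertex consumes (one checks directly, for instance, that the minimal seven-vertex example of Remark \ref{7vertices} has $g_2 = 3$). The next step is to sharpen the single-vertex estimate of Proposition \ref{proposition:lower bound} into a joint lower bound for the two singular vertices $u$ and $w$ at once, and to show that the hypothesis $g_2(K)\le 12$ leaves just enough slack to force the two links into a controlled position relative to one another.

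With the singular structure thus constrained, I would produce the edge unfolding explicitly: locate an edge whose folding accounts for both $\mathbb{RP}^2$ points, and reverse it to obtain a normal $3$-pseudomanifold $\tilde K$ with strictly fewer singularities, either a closed manifold or a pseudomanifold already covered by Proposition \ref{proposition:1-singularity}. The unfolding would be set up so that $g_2(\tilde K)$ is controlled by $g_2(K)$, keeping $\tilde K$ inside the reducible range. Applying the manifold-level structure theory recalled above (the results of Luo--Stong and Tamura, together with the $g_2\le 15$ analysis underlying Proposition \ref{proposition:1-singularity}) then expresses $\tilde K$ as a sequence of connected sums, bistellar $1$-moves, edge contractions, and edge expansions starting from boundary complexes of $4$-simplices. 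Re-applying the single edge folding at the end recovers $K$ and produces exactly the asserted list of operations.

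The main obstacle is the bookkeeping around the unfolding. One must verify both that a legal edge unfolding exists---the two $\mathbb{RP}^2$ vertices have to be joined by an edge in the folded configuration, and their local links must be compatible with an unfolding---and that the resulting $g_2(\tilde K)$ genuinely stays within the threshold where the manifold/one-singularity classification applies. Extracting a clean \emph{joint} lower bound on $g_2$ from the two singular links, rather than the per-vertex bound of Proposition \ref{proposition:lower bound}, is the quantitatively delicate point, and it is precisely this estimate that makes the constant $12$ (rather than some larger value) the correct cutoff in the case $m=1$.
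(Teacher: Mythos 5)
Your proposal matches the paper's approach exactly: the paper gives no independent argument for this proposition, obtaining it solely by substituting $m=1$ into \cite[Theorem 5.5]{BSR1}, which is precisely the citation you lead with. Your supplementary ``from scratch'' sketch (unfolding the edge folding, bounding the joint $g_2$-contribution of the two $\mathbb{RP}^2$ links, and reducing to the one-singularity classification) is a plausible outline of the cited result's internal proof, but it is not needed here and the paper does not attempt it.
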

\begin{remark}\label{suspension}{\rm If $K$ is a normal $3$-pseudomanifold and $K^\psi_{uv}$ is obtained from $K$ by an edge folding at $uv$, then $g_2(K^\psi_{uv}) = g_2(K)+ 3.$
Furthermore, if $K$ is a triangulation of a 3-sphere then $K^\psi_{uv}$ is a triangulation of the suspension of $\mathbb{RP}^2$ because $|K^\psi_{uv}|$ is obtained from a 3-ball by the following procedure: $(i)$ choose two connected regions in the boundary of the 3-ball having a point in common, $(ii)$ identify those two regions in reverse orientation, and $(iii)$ then take the resulting topological space with its boundary conned off.
	}
\end{remark}
\section{Average edge order of a normal $3$-pseudomanifold}

Consider $K$ as a normal $3$-pseudomanifold with singularities, containing $n$ singular vertices denoted by ${t_1, t_2, \ldots, t_r, p_1, p_2, \ldots, p_{r'}}$, where $r+r'=n$ for some $r, r' \in \mathbb{N}$. According to surface classification, each vertex $x\in K$ has a link, $\lk(x)$, which is either a triangulated sphere, a triangulation of connected sums of tori, or a triangulation of connected sums of projective planes. Specifically, if $|\lk(t_i)|\cong \#{h_i}\mathbb{T}^2$ and $|\lk(p_j)|\cong\#{m_j}\mathbb{RP}^2$ for $h_i\geq 0$, $m_j\geq 0$, $1\leq i\leq r$, and $1\leq j\leq r'$, where $\mathbb{T}^2$ and $\mathbb{RP}^2$ represent a torus and a projective plane, respectively, then $\chi(t_i)=2-2h_i$ and $\chi(p_j)=2-m_j$.

Let $V, E, F$, and $T$ denote the count of vertices, edges, faces, and tetrahedra in $K$, respectively. Considering $g_2(K)=E-4V+10$ and $g_3(K)= F-3E+6V-10$, it follows that $g_2(K)+g_3(K)=F-2E+2V$.

\noindent Additionally, we have $$g_2(K)+g_3(K)=\sum_{v\in K}(2-\chi(\lk(v))).$$
\noindent Thus, the average edge order of a normal $3$-pseudomanifold is, 

\begin{equation*}
\begin{split} 
\mu_0(K)&=\frac{3F}{E}=3\frac{(g_2(K)+g_3(K)+2E-2V)}{E}\\
&=3\frac{(\sum_{v\in K}(2-\chi(\lk(v)))+2E-2V)}{E}\\
&=3\frac{(2V-\sum_{v\in K}\chi(\lk(v))+2E-2V)}{E}\\
&=3\frac{(-\sum_{v\in K}\chi(\lk(v))+2E)}{E}\\
&=6-3\frac{\sum_{v\in K}\chi(\lk(v))}{E}\\
&=6-3\frac{2(V-n)}{E}-3\frac{\sum_{i=1}^{r}\chi(\lk(t_i))}{E}-3\frac{\sum_{i=1}^{r'}\chi(\lk(p_i))}{E}\\
&=6-6\frac{(V-n)}{E}-3\frac{\sum_{i=1}^{r}(2-2h_i)}{E}-3\frac{\sum_{i=1}^{r'}(2-m_i)}{E}\\
&=6-6\frac{V}{E}+6\frac{n}{E}-6\frac{(r+r')}{E}+6\frac{\sum_{i=1}^{r}h_i}{E}+3\frac{\sum_{i=1}^{r'}m_i}{E}\\
&=6-6\frac{V}{E}+6\frac{\sum_{i=1}^{r}h_i}{E}+3\frac{\sum_{i=1}^{r'}m_i}{E}.\\
\end{split}
\end{equation*}

\noindent {\em Proof of Theorem} \ref{theorem:maintheorem} $(ii)$:	
		
\noindent Suppose that $\mu_0(K)\leq\frac{9}{2}$, then we obtain

\begin{equation*}
\begin{split} 
	6-6\frac{V}{E}+6\frac{\sum_{i=1}^{r}h_i}{E}+3\frac{\sum_{i=1}^{r'}m_i}{E}&\leq \frac{9}{2}\\
	\implies -6\frac{V}{E}+6\frac{\sum_{i=1}^{r}h_i}{E}+3\frac{\sum_{i=1}^{r'}m_i}{E}&\leq\frac{9}{2}-6\\
		\implies 4\frac{V}{E}-4\frac{\sum_{i=1}^{r}h_i}{E}-2\frac{\sum_{i=1}^{r'}m_i}{E}&\geq 1\\
		\implies E-4V+10\quad &\leq 10-4\sum_{i=1}^{r}h_i-2\sum_{i=1}^{r}m_i\\
		\implies  g_2(K) \quad &\leq 10-4\sum_{i=1}^{r}h_i-2\sum_{i=1}^{r}m_i.\\
\end{split}
\end{equation*}

\medskip	
	Consider $h$ as the maximum value in the set $\{h_1,h_2,...,h_r\}$, and let $m$ be the maximum value in $\{m_1,m_2,...,m_r'\}$. Given Lemma \ref{proposition:lower bound}, which states that $g_2(K)\geq g_2(\lk(t))$ for any $t\in K$, we derive:  
	$$6h\leq  10-4\sum_{i=1}^{r}h_i-2\sum_{i=1}^{r}m_i$$ and $$3m\leq  10-4\sum_{i=1}^{r}h_i-2\sum_{i=1}^{r}m_i.$$ 

\noindent	It is evident that the inequalities above are satisfied if and only if one of the following situations arises:

\noindent	$(a)$ $n=1$, $h=1$, $m=0$ 

\noindent	$(b)$ $n=1$, $h=0$, $m=1$
	
\noindent	$(c)$ $n=1$, $h=0$, $m=2$ 
	
\noindent	$(d)$ $n=2$, $h=0$, $m=1$

\noindent	$(e)$ $n=3$, $h=0$, $m=1.$
\medskip\\		         
\noindent 
Due to the fact that the sum of Euler characteristics for the vertex links of a normal $3$-pseudomanifold is always even, we can infer that cases $(b)$ and $(e)$ are not feasible.
\medskip\\
\noindent\textbf{Cases $(a), (c)$:}
In both cases, $g_2(K)\leq 6$, and $K$ features precisely one singularity at either $t_1$ or $p_1$, where $\lk(t_1)$ forms a triangulated torus and $\lk(p_1)$ constitutes a triangulated Klein bottle. Moreover, $g_2(\lk(t_1))=g_2(\lk(p_1))=6$. Additionally, according to Proposition \ref{proposition:lower bound}, we establish $g_2(K)\geq 6$. Consequently, $g_2(K) = 6$. Henceforth, according to Proposition \ref{proposition:1-singularity}, $K$ is obtained from some boundary complexes of $4$-simplices by a sequence of operations, including connected sums,  bistellar $1$-moves, edge contractions, edge expansions, and vertex folding. Furthermore, $|K|$ is a handlebody with its boundary coned off. 	

\noindent \textbf{Case $(d)$:} In this instance, $K$ encompasses two $\mathbb{RP}^2$ singularities at vertices $p_1$ and $p_2$. According to Proposition \ref{proposition:lower bound}, we have $3\leq g_2(K)\leq 6$. Consequently, by Proposition \ref{proposition:2-singularity}, we deduce that $K$ is obtained from some boundary complexes of $4$-simplices by a sequence of possible operations, including connected sums,  bistellar $1$-moves, edge contractions, edge expansions, and an edge folding. Moreover, $K$ is the triangulation of suspension of $\mathbb{RP}^2$ by Remark \ref{suspension}. 

Hence, by combining cases $(a), (c)$ and $(d)$, proof of Theorem \ref{theorem:maintheorem} $(ii)$ follows.\\

\noindent{\em Proof of Theorem} \ref{theorem:maintheorem} $(i)$:
 
 \noindent Consider $K$ as a normal $3$-pseudomanifold with singularities, where $\mu_0(K)\leq \frac{30}{7}$. Therefore, $\mu_0(K)\leq \frac{30}{7}< \frac{9}{2}$. Consequently, based on the preceding discussions, we have only three potential cases for $K$: $(a),$ $(c),$ and $(d)$. In both cases $(a)$ and $(c)$, $\mu_0(K)=\frac{9}{2}>\frac{30}{7}$, rendering them implausible. Now, let's discuss the case $(d)$. Assume $K$ contains two $\mathbb{RP}^2$ singularities at vertices $u$ and $v$. Hence, $3\leq g_2(K)\leq 6$. Then, we obtain

 \begin{equation*}
     \begin{split}
         \mu_0(K)=6-\frac{6(V-1)}{E}&\leq\frac{30}{7}\\
 \implies 6-\frac{6(V-1)}{4V-10+g_2(K)}&\leq\frac{30}{7}\\
\implies \frac{18V+6(g_2(K-9))}{4V-10+g_2(K)}&\leq\frac{30}{7}\\
\implies 126V-378+42g_2(K)&\leq 120V+30g_2(K)-300\\
 \implies V \quad&\leq 13-2g_2(K).\\
     \end{split}
 \end{equation*}
\noindent 
Considering that for any normal $3$-pseudomanifold $\Delta$, $V(\Delta)\geq 7$, the aforementioned inequality holds if and only if $g_2(K)=3$ and $V(K)=7$. Consequently, this implies that $\mu_0(K)=\frac{30}{7}$, and $K$ represents a triangulation of the one-vertex suspension of $\mathbb{RP}^2$. Conversely, according to Remark \ref{7vertices}, we obtain a triangulation $K$ of the one-vertex suspension of $\mathbb{RP}^2$ with seven vertices, satisfying $\mu_0(K)=\frac{30}{7}$ and $g_2(K)=3$.
This concludes the proof of Theorem \ref{theorem:maintheorem} $(i)$.\\

\noindent {\em Proof of Theorem} \ref{theorem:maintheorem} $(iii)$:

\noindent 
Let $h$ and $m$ denote the same as previously stated. Therefore, $g_2(K)\geq 6h$, and $g_2(K)\geq 3m$. Consequently, $E(K)\geq 6h$ and $E(K)\geq 3m$.

\noindent Additionally, we have
		$$\mu_0(K)=6-6\frac{V}{E}+6\frac{\sum_{i=1}^{r}h_i}{E}+3\frac{\sum_{i=1}^{r'}m_i}{E}.$$ 
	\noindent Thus,	
\begin{equation*}
    \begin{split}
        \mu_0(K)&=6-6\frac{V}{E}+6\frac{\sum_{i=1}^{r}h_i}{E}+3\frac{\sum_{i=1}^{r'}m_i}{E}\\
        &\leq 6+6\frac{rh}{E}+3\frac{r'm}{E}\\
        &<6+6\frac{rh}{6h}+3\frac{r'm}{3m}\\
     &=6+r+r'\\
     &=6+n.\\
    \end{split}
\end{equation*}
Hence, $\mu_0(K)<6+n$. 
\medskip\\	
Consider $V_m, E_m,$ and $F_m$ as the number of vertices, edges, and faces, respectively, of the minimal triangulation $\mathbb{T}_m$ representing a torus with genus $m\geq 3$. According to \cite{JungermanRingel}, we find:
$V_m=\ceil{\frac{7+\sqrt{1+48m}}{2}}$, $E_m=3V_m+6m-6$, $F_m=2V_m+4m-4$. Now, if we take the suspension of the surface $\mathbb{T}_m$, we obtain a normal $3$-pseudomanifold denoted as $K_m$ with two singularities, where:
$V(K_m)=V_m+2$, $E=2V_m+E_m$, and $F=2E_m+F_m$.
\noindent	Then, $$\mu_0(K_{m})=\frac{3F}{E}=\frac{6E_m+3F_m}{2V_m+E_m}
=\frac{24V_m+48m-48}{5V_m+6m-6}.$$
Thus,  $\mu_0(K_{m})\rightarrow 8$ as $\ m\rightarrow\infty$.
This completes the proof of Theorem \ref{theorem:maintheorem}$(iii)$.
 \begin{remark}
 	{\rm 
If $K_1$ represents a normal $3$-pseudomanifold obtained from a stacked sphere $K'$ with $g_2(K')=0$ via a vertex folding, then the following relations hold: $E(K')=4V(K')-10$, $F(K')=6V(K')-20$, $T(K')=3V(K')-10$. Moreover, $V(K_1)=V(K')-3$, $E(K_1)=E(K')-6=4V(K')-16$, $F(K_1)=F(K')-4=6V(K')-24$, $T(K_1)=T(K')-2=3V(K')-12$. Now, we find that $$\mu_0(K_1)=\frac{3F}{E}=3\frac{6V-24}{4V-16}=\frac{9}{2}(\frac{V-4}{V-4})=\frac{9}{2}.$$} 
 \end{remark} 
 \begin{corollary}
	If $K$ is a normal $3$-pseudomanifold and $\mu_0(K)<\frac{30}{7}$, then $K$ is triangulation of a $3$-sphere.
	\end{corollary}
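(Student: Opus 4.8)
The plan is to derive this corollary directly from Theorem~\ref{theorem:maintheorem}$(i)$ combined with the Luo--Stong result recorded in Proposition~\ref{LuoStong}. The single numerical fact driving everything is that $\frac{30}{7} = 4.2857\ldots < \frac{9}{2}$, so the hypothesis $\mu_0(K) < \frac{30}{7}$ in particular yields $\mu_0(K) < \frac{9}{2}$.

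First I would eliminate the possibility of singular vertices by a contrapositive argument. Suppose toward a contradiction that $K$ has at least one singularity. Then $K$ is a normal $3$-pseudomanifold \emph{with} singularities, and Theorem~\ref{theorem:maintheorem}$(i)$ forces $\mu_0(K) \geq \frac{30}{7}$, contradicting the hypothesis $\mu_0(K) < \frac{30}{7}$. Hence $K$ has no singular vertices, which by definition means that the link of every vertex is a triangulated $2$-sphere.

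Once all vertex links are $2$-spheres, $K$ is a triangulation of a closed connected $3$-manifold $M$: the purity of $K$, the condition that every triangle lies in exactly two tetrahedra, and $|\lk(v)|\cong\mathbb{S}^2$ for every vertex together guarantee that every point of $|K|$ has a Euclidean neighborhood, and connectedness comes from the normality axioms. This is the only step requiring a small verification, namely that a singularity-free normal $3$-pseudomanifold is genuinely a closed connected $3$-manifold, so that Proposition~\ref{LuoStong} is applicable.

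Finally, since $\mu_0(K) < \frac{9}{2}$ and $K$ is a triangulation of a closed connected $3$-manifold, part $(iii)$ of Proposition~\ref{LuoStong} immediately gives that $K$ is a triangulation of $\mathbb{S}^3$, which completes the proof. I do not expect any substantive obstacle: the entire content is packaged in the two cited results, and the only things to check are the inequality $\frac{30}{7} < \frac{9}{2}$ and the identification of a singularity-free normal $3$-pseudomanifold with a closed $3$-manifold.
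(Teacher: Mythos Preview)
Your proposal is correct and follows essentially the same approach as the paper: use Theorem~\ref{theorem:maintheorem}$(i)$ contrapositively to rule out singular vertices, observe that $K$ is then a closed connected $3$-manifold, and invoke Proposition~\ref{LuoStong}$(iii)$. The paper's proof is terser (it omits the explicit remark that $\tfrac{30}{7}<\tfrac{9}{2}$ and the manifold verification), but the logic is identical.
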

 \begin{proof}
    If $K$ represents a normal $3$-pseudomanifold and $\mu_0(K)<\frac{30}{7}$, according to Theorem \ref{theorem:maintheorem}, $K$ does not contain any singular vertices. This indicates that $K$ forms a triangulation of a $3$-manifold, where $\mu_0(K)<\frac{30}{7}$. Consequently, the result follows from Proposition \ref{LuoStong}$(iii)$.
 \end{proof}
\section{On the upper bound of average edge order}
The upper bound provided in Theorem \ref{theorem:maintheorem} varies according to the number of singularities. In this context, we present several examples of normal $3$-pseudomanifolds with singularities that possess a higher average edge order, sourced from the simplicial complex library of GAP \cite{jspreer}. Denote by [SCLib, $n$] the simplicial complex located at the $n$-th position within the simplicial complex library in GAP. 

\noindent \textbf{Examples:}
\begin{enumerate}
	\item Let $K_1=[\text{SCLib}, 60]$ and $K_2=[\text{SCLib}, 61]$, which represent two normal $3$-pseudomanifolds with the vector $(V,E,F,T)=(11, 55, 154, 77)$. Notably, both $K_1$ and $K_2$ exhibit $\mu_0(K_1)=\mu_0(K_2)=8.4$. In both complexes, the link of each vertex forms a triangulation consisting of $6$ copies of $\mathbb{RP}^2$. The automorphism groups for $K_1$ and $K_2$ are $D_{22}$ (a dihedral group with $22$ elements) and $C_{11}$ (a cyclic group with $11$ elements), respectively.
	
	\item Let $K_3=[\text{SCLib}, 540]$ be a normal $3$-pseudomanifold with the vector $(V,E,F,T)$ $=(17, 136, 544, 272)$. Consequently, $\mu_0(K_3)=12$. The link of each vertex in $K_3$ forms a triangulation consisting of $18$ copies of $\mathbb{RP}^2$. The automorphism group of $K_3$ is $C_{17}:C_{16}$, a semidirect product of cyclic groups of orders $16$ and $17$.
	
	\item Let $K_5=[\text{SCLib}, 587]$ be a normal $3$-pseudomanifold with the vector $(V,E,F,T)$ $=(19, 171, 684, 342)$. Consequently, $\mu_0(K_5)=12$. The link of each vertex in $K_5$ forms a triangulation consisting of $20$ copies of $\mathbb{RP}^2$. The automorphism group of $K_5$ is $C_{19}:C_{18}$.
	
	\item Let $K_4=[\text{SCLib}, 541]$ be a normal $3$-pseudomanifold with the vector $(V,E,F,T)$ $=(17, 136, 680, 340)$. Consequently, $\mu_0(K_4)=15$. The link of each vertex in $K_4$ forms a triangulation consisting of $26$ copies of $\mathbb{RP}^2$. The automorphism group of $K_4$ is the same as the automorphism group of $K_3$, i.e., $C_{17}:C_{16}$.

\end{enumerate}

It is interesting to note that in all the above examples, each vertex in the complexes is a singular vertex. This draws attention towards the study of normal $3$-pseudomanifolds with all vertices being singular. Additionally, it's worth mentioning that all of these complexes exhibit neighborly properties, with even the example of $K_4$ being $3$-neighborly. If a normal $3$-pseudomanifold $K$ is $3$-neighborly, then $\mu_0(K)=\frac{3\binom{V}{3}}{\binom{V}{2}}=V-2$. 
These observations motivate us to study the following questions of interest.
\begin{ques}
    How many vertices are needed at minimum to construct a neighborly normal $3$-pseudomanifold, where the link of each vertex forms a triangulated surface of genus $g$ for $g\in\mathbb{N}$? 
 \end{ques}
\begin{ques}\label{aeo}
Is there a $3$-neighborly normal $3$-pseudomanifold $K$ with $V(K)>n$ for every positive integer $n\in\mathbb{N}$?  
 \end{ques}

\noindent {\bf Acknowledgement:} 
We would like to thank Prof. Wolfgang Kühnel for his comments and suggestions on the manuscript. The first author is supported by the Mathematical Research Impact
Centric Support Research Grant (MTR/2022/000036) by ARNF, India. The second author is supported by CSIR (India).

\smallskip


\smallskip

\noindent {\bf Declarations}

\noindent {\bf Conflict of interest:} No potential conflict of interest was reported by the authors.
	
	{

\end{document}